\renewcommand{\@makecaption}[2]{%
\vspace{\abovecaptionskip}%
\sbox{\@tempboxa}{#1. #2}
\ifdim\wd\@tempboxa >\hsize
   #1. #2\par
   \else
   \global\@minipagefalse
   \hbox to \hsize    {\hfil #1. #2\hfil}%
\fi
\vspace{\belowcaptionskip}}
\newtheorem{theorem}{Theorem}
\newtheorem{lemma}{Lemma}
\theoremstyle{remark}
\newtheorem{remark}{Remark}
\theoremstyle{definition}
\newtheorem{definition}{Definition}
\begin{document}
\large
\begin{center}
\bf \Large On Brownian motion on the plane with membranes on rays with a common endpoint
\end{center}
\vskip 10 pt
\begin{center}
\Large Olga V. Aryasova, 
Andrey Yu. Pilipenko\footnote{Research partially supported by Ministry of Education and
Science of Ukraine, Grant № F26/433-2008, and  National Academy of Sciences of Ukraine, Grant № 104-2008.}
\end{center}
\vskip 10 pt
\begin{abstract}
We consider a Brownian motion on the plane with semipermeable membranes on $n$ rays that have a common endpoint in the origin. We obtain the necessary and sufficient conditions for the process to reach the origin and we show that the probability of hitting the origin is equal to zero or one.
\end{abstract}

\section*{Introduction}

Let $c_1,\dots,\ c_n$ be given rays in $\mathbb{R}^2$ such that
they share a common endpoint. Suppose polar coordinates in
$\mathbb{R}^2$ are denoted by $(r,\varphi),r\geq0,\varphi\in[0,2\pi)$ and
$$
c_k=\{(r,\varphi):r\geq0,\ \varphi=\varphi_k\},
$$
where $0\leq\varphi_k<2\pi, \varphi_j\neq\varphi_k, j\neq k, j,k\in\{1,\dots,n\}$.

Let $n_k, k=1,\dots,n,$ denote the unit vector normal to $c_k$
that points  anticlockwise and let $v_k$ be a vector in $\mathbb
R^2$ such that $(v_k,n_k)=1$. The angle between $n_k$ and $v_k$
denoted by $\theta_k\in (-\frac{\pi}{2},\frac{\pi}{2})$ is
referred to as a positive if and only if $v_k$ points towards the
origin. The case of $n=5, \theta_1>0, \theta_2
>0$ is shown in Figure \ref{Fig. 1}.

\begin{figure}[h]
  \includegraphics[width=10 cm]{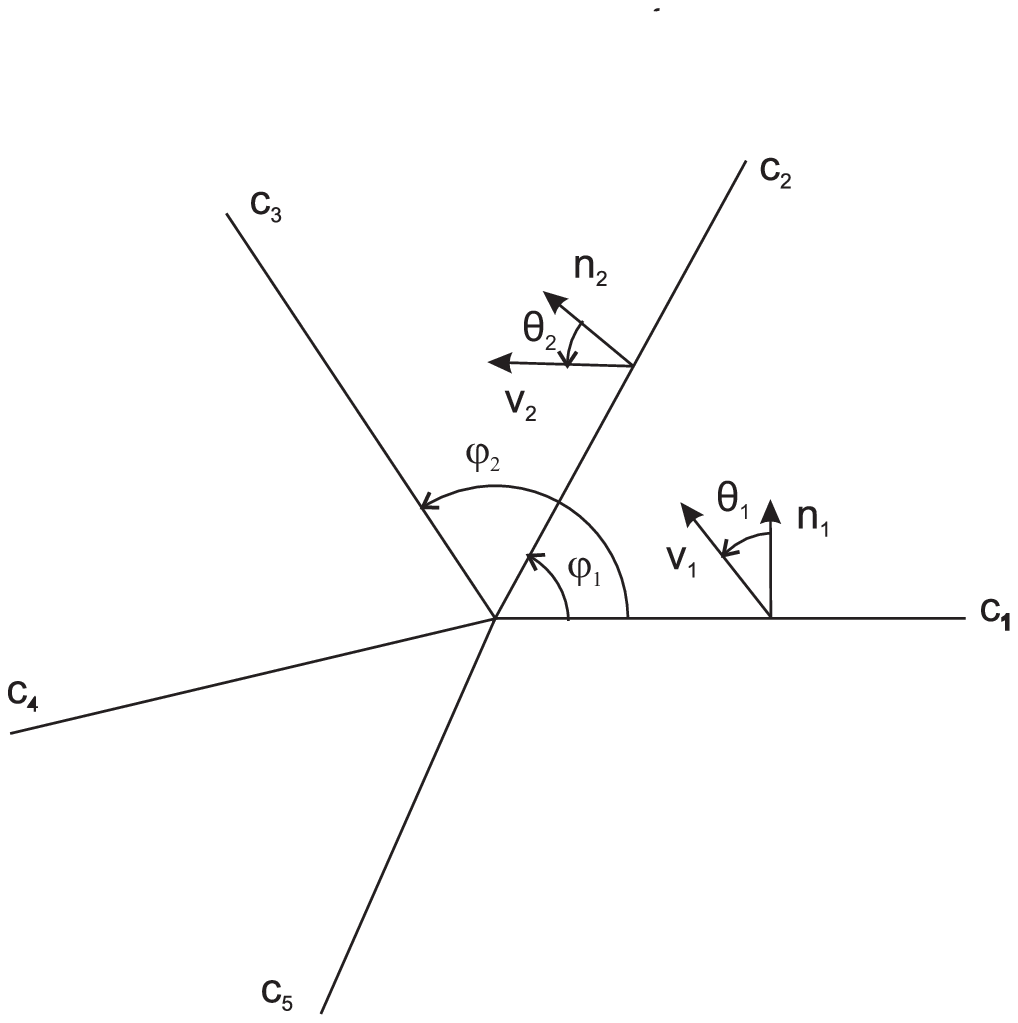}\\
  \caption{}
  \label{Fig. 1}
\end{figure}

Let $\gamma_k,\ k=1,\dots,n,$ be a real constant such that
$|\gamma_k|\leq1$.
Consider the stochastic differential equation in $\Bbb R^2$
\begin{equation}
dx(t)=dw(t)+\sum_{k=1}^n \gamma_k v_k dL_x^{c_k}(t) \label{main}
\end{equation}
with initial condition $x(0)=x^0,\ x^0\in\Bbb R^2.$

In this equation $(w(t))_{t\geq0}$ is a Wiener process in $\Bbb
R^2$, $(L_x^{c_k}(t))_{t\geq 0}$ is a local time of the process
$(x(t))_{t\geq0}$ at the ray $c_k$, and it is defined by the
formula
\begin{equation}
L_x^{c_k}(t)=\lim_{\varepsilon\downarrow0}\frac{1}{2\varepsilon}\int_0^t \label{L^c_k} {\hbox{1\!\!\!\!\;\,{I}}}_{A_{\varepsilon}^k}(x(s))ds,
\end{equation}
where $A_\varepsilon^k=\{x\in\Bbb R^2:\exists y\in c_k,s\in[-1,1] \mbox{ such that } x=y+\varepsilon s n_k$\}.

The solution of this equation can be regarded as a
Wiener process in $\mathbb R^2$ skewing on
the rays $c_1,\dots, c_n$ until the time it reaches the origin (see Section 1). The skew on $c_k$ is defined by $\gamma_k$ being  a coefficient of permeability. When $\gamma_k$ is equal to $1$ we have a
Wiener process reflecting instantaneously at $c_k$ in the
direction of $v_k$. If $\gamma_k$ is equal to $-1$ the process
reflects at $c_k$ in the direction of $-v_k$. If $\gamma_k\in (-1,1)$
a semipermeable membrane is on $c_k$.

The aim of the investigation is to show that the process hits the
origin, when starting away from it, with probability zero or one.
We obtain an explicit expression in variables
$\theta_1,\dots,\theta_n,\gamma_1,\dots,\gamma_n,\xi_1,\dots,\xi_n$
whose sign this probability depends on.

The case of the wedge (two rays) with reflection on its sides was studied by Varadhan and Williams \cite{VW}. They proved (in our notation) that the process does not reach the origin if $\theta_1-\theta_2\leq 0$ and does reach the origin if $\theta_1-\theta_2>0$. In this case our result coincides with that of Varadhan and Williams (see Example 2).

\section{Construction of the process}

In this Section we build the process which is a solution of
equation (\ref{main}) up to the time it hits the origin.

Conventionally, we put $\varphi_{nl+k}=\varphi_k, c_{nl+k}=c_k,$  $\gamma_{nl+k}=\gamma_k,$ $
k=1,\dots,n,\ l\in{\Bbb N}.$ Define $\xi_k=\varphi_{k+1}-\varphi_k$.  Let
$S_k=\{(r,\varphi):r>0, \ \varphi_k\leq\varphi\leq \varphi_{k+1}\}, k=1,\dots,n$,
be the wedge $c_k 0 c_{k+1}.$ Let $\varphi_1=0.$
\begin{remark}  Without loss of generality we can assume that
$\xi_k\leq\pi, k=1,\dots,n.$ Indeed, otherwise we can introduce
additional ray \label{Remark 1}
\begin{eqnarray*}
c_{ad}&=&\{(r,\varphi):r\geq0,\ \varphi=\pi\},
\end{eqnarray*}
and put $\gamma_{ad}=0.$
\end{remark}

So, throughout this Section we assume $\xi_k\leq\pi/2, k=1,\dots,n.$

\begin{lemma}
\label{Lemma 1}
Given $x^0\in \Bbb R^2$, there exists  a
unique strong solution of the equation (\ref{main}) up to the
first time of hitting the origin.
\end{lemma}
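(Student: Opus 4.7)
The plan is to construct $x(\cdot)$ by patching together local solutions, exploiting the fact that outside any neighborhood of the origin the rays $c_1,\ldots,c_n$ are pairwise disjoint, so (\ref{main}) locally reduces to a one-ray problem.

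Fix $\rho\in(0,|x^0|)$. In the closed region $\{y\in\mathbb R^2:|y|\ge\rho\}$ every point admits a ball of some uniform radius $r=r(\rho)>0$ that meets at most one of the rays $c_k$ and is disjoint from the origin; such an $r$ exists because the rays are distinct and share only the origin as a common point. Inside such a ball equation (\ref{main}) becomes the single-ray equation
\[
dy(t)=dw(t)+\gamma_k v_k\,dL_y^{c_k}(t).
\]
After straightening $c_k$ to a coordinate axis and using $(v_k,n_k)=1$, the normal component of $y$ satisfies a scalar skew Brownian SDE with skew parameter $\gamma_k\in[-1,1]$, which admits a unique strong solution by the Harrison--Shepp / Le Gall theory; the tangential component is then given explicitly as a Wiener process plus $\gamma_k\tan\theta_k$ times the local time at $0$ of the normal component. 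Hence (\ref{main}) has a unique strong solution inside the ball up to the first exit.

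I would then set $\tau_0=0$ and inductively define $U_m$ to be a ball of radius $r$ around $x(\tau_m)$ as above, and let $\tau_{m+1}=\sigma_\rho\wedge\inf\{t>\tau_m:x(t)\notin U_m\}$, where $\sigma_\rho:=\inf\{t:|x(t)|\le\rho\}$. The local construction together with the strong Markov property produces a unique strong solution on $[0,\tau_\infty)$ with $\tau_\infty:=\lim_m\tau_m\le\sigma_\rho$. A uniform lower bound $\mathbb E[\tau_{m+1}-\tau_m\mid\mathcal F_{\tau_m}]\ge c(\rho)>0$ on $\{\tau_m<\sigma_\rho\}$, coming from standard estimates on the exit time of a one-ray skew Brownian motion from a ball of fixed radius, forces $\tau_\infty=\sigma_\rho$ almost surely. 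Letting $\rho\downarrow 0$ then yields a unique strong solution on $[0,\tau^\ast)$, where $\tau^\ast$ is the first hitting time of the origin.

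The main obstacle is the non-accumulation of $\{\tau_m\}$ strictly before $\sigma_\rho$ in each annulus; this reduces to the uniform lower bound on expected exit times, which in turn rests on the single-ray case together with the requirement $|\gamma_k|\le 1$ keeping us in the skew Brownian regime. Strong existence and uniqueness for the single-ray oblique-skew equation is classical, so once the geometric localization is in place the global construction and the corresponding uniqueness are routine.
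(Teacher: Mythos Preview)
Your argument is correct and rests on the same core idea as the paper: away from the origin, localize so that only one ray is active, reduce the normal coordinate to the one-dimensional skew Brownian SDE of Harrison--Shepp, recover the tangential coordinate explicitly, and patch. The paper, however, uses a coarser and slightly cleaner localization than your ball-exit scheme: it lets $\tau_1$ be the first time $x$ hits some ray $c_{j_1}$, and for $m\ge 2$ sets $\tau_m$ to be the first time after $\tau_{m-1}$ that $x$ hits a ray \emph{different} from $c_{j_{m-1}}$. On each interval $[\tau_m,\tau_{m+1})$ only $L^{c_{j_m}}_x$ can grow, so the equation is a single-ray equation globally on that interval (not merely inside a small ball), and after a rotation it takes exactly the decoupled form you describe. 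For the non-accumulation step the paper simply observes that the solution does not explode and cannot cross a fixed annulus $\{a<|x|<b\}$ infinitely often on a bounded time interval, so $\zeta:=\lim_m\tau_m$ is either $+\infty$ or the hitting time of $0$. Your route via a uniform lower bound on conditional expected ball-exit times reaches the same conclusion with more bookkeeping; note that a lower bound on conditional expectations alone does not immediately force $\tau_\infty=\sigma_\rho$, and the cleanest way to close that step is the continuity argument (each genuine ball exit moves $x$ by exactly $r$, which is incompatible with $\tau_{m+1}-\tau_m\to 0$ on a compact time interval), which is in the same spirit as the paper's annulus-crossing observation.
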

\begin{proof}
(i) The process  $(x(t))_{t\geq 0}$ is an ordinary Wiener
process up to the first time of hitting  one of the rays
$c_1,\dots,c_n$ . Define
$$
\tau_1=\inf\left\{t\geq 0: x(t)\in c_{j_1}\ \mbox{for some}\
j_1\in\{1,\dots,n\}\right\}.
$$
Without loss of generality,  let $x(\tau_1)\in c_1$. Until the
process reaches another ray only the local time at the ray $c_1$
can increase.  Put
$$
\tau_2=\inf\left\{t>\tau_1 : x(t)\in  c_{j_2}\ \mbox{for some}\
j_2\in\{1,\dots,n\},j_2\neq j_1\right\}.
$$
Then for all $t<\tau_2$ equation (\ref{main}) has the form
\begin{eqnarray}
dx_1(t)&=&dw_1(t)-\gamma_1 \tan\theta_1dL_{x}^{c_1}(t),\label{x_1}\\
dx_2(t)&=&dw_2(t)+\gamma_1dL_{x}^{c_1}(t),\label{x_2}
\end{eqnarray}
where $(x_1(t),x_2(t))$ are the cartesian coordinates of the
process $(x(t))_{t\geq 0}$, $(w_1(t))_{t\geq 0}$ and
$(w_2(t))_{t\geq 0}$ are independent one-dimensional Wiener
processes.  Note that by Remark \ref{Remark 1}
$$
L_{x}^{c_1}(t)=L_{x_2}^0(t):=\lim_{\varepsilon\downarrow
0}\frac{1}{2\varepsilon}\int_0^t{\hbox{1\!\!\!\!\;\,{I}}}_{[-\varepsilon,\varepsilon]}(x_2(s))ds,\
t\leq \tau_2,
$$
where $(L_{x_2}^0(t))_{t\geq 0}$ is a usual local time of the
process $(x_2(t))_{t\geq 0}$ at zero. There exists a unique strong
solution of equation (\ref{x_2}) (cf.\cite{HS}). Substitution of
solution to (\ref{x_2}) into (\ref{x_1}) allows us to get the
solution of the latter. Hence we have proved existence and
uniqueness of solution to equation (\ref{main}) up to the time
$\tau_2$.

(ii) Put for $m\geq 2$
$$
\tau_m=\inf\left\{t>\tau_{m-1}: x(t)\in  c_{j_m}\ \mbox{for some}\ j_m\in\{1,\dots,n\},j_m\neq j_{m-1}\right\}.
$$
Assume $x(\tau_2)\in c_2$. Then for $\tau_2\leq t<\tau_3$,
the equation (\ref{main}) has the form
\begin{eqnarray}
dx_1(t)&=&dw_1(t)-\gamma_2\frac{\sin(\xi_1+\theta_2)}{\cos\theta_2}dL_{x}^{c_2}(t),\label{x_1tau_2}\\
dx_2(t)&=&dw_2(t)+\gamma_2\frac{\cos(\xi_1+\theta_2)}{\cos\theta_2}dL_{x}^{c_2}(t)\label{x_2tau_2}.
\end{eqnarray}
We do the clockwise rotation of coordinates defined  by the formulas
\begin{eqnarray}
\begin{split}
x'_1&=&x_1\cos\xi_1+x_2\sin\xi_1,\\
x'_2&=&-x_1\sin\xi_1+x_2\cos\xi_1.\label{x'}
\end{split}
\end{eqnarray}
This map takes the ray $c_2$ to the ray $c_1,$ i.e. $c'_2=c_1$. Moreover, it preserves the angles. Using the Ito formula we get the stochastic differential equations
\begin{eqnarray}
dx'_1(t)&=&dw'_1(t)-\gamma_2\tan\theta_2dL_{x'}^{c'_2}(t),\label{x1prime}\\
dx'_2(t)&=&dw'_2(t)+\gamma_2dL_{x'}^{c'_2}(t),\label{x2prime}
\end{eqnarray}
which holds true for $\tau_2\leq t<\tau_3$. Here
$w'_1(t)=\cos\xi_1dw_1(t)+\sin\xi_1dw_2(t),\ t\geq 0,$ and
$w'_2(t)=-\sin\xi_1dw_1(t)+\cos\xi_1dw_2(t),\ t\geq 0,$ are
independent one-dimensional Wiener processes as a square integrable
martingales with characteristics $t$.  By the arguments similar to those of (i)
there exists a unique strong solution to equations (\ref{x1prime}) and (\ref{x2prime}) for $\tau_2\leq t<\tau_3.$ It means, because the inverse transformation exists, that the equations (\ref{x_1tau_2}) and (\ref{x_2tau_2}) has a unique strong solution for $\tau_2\leq t<\tau_3$. The case of $x(\tau_2)\in c_n$ can be
treated analogously. Thus we have built a strong solution to equation (\ref{main}) up to the time $\tau_3$ and proved the uniqueness of it.

(iii) Repeating this procedure of localization of equation
(\ref{main}) we build a solution to (\ref{main}) up to the time
$\zeta=\lim_{m\to\infty}\tau_m$. It is easily seen that the process $(x(t))_{t\geq0}$ does not reach infinity in finite time a.s. and does not intersect any ring $a<|x|<b,\ 0<a<b, x\in \Bbb R^2,$ infinitely many times on $[0,T]$. So,  $\zeta=+\infty$ or $x(\zeta-)=0.$
\end{proof}

\section{Coordinate transformation and change of time}

As above,  we assume that $\xi_k\leq\pi/2, k=1,\dots,n.$

To investigate the value of probability $\Bbb
P_{x^0}\{\zeta<\infty\}$ we make certain transformations of the
process.

Let us change coordinates by the formulas
\begin{eqnarray*}
\tilde x_1 & = &-\ln\sqrt{x_1^2+x_2^2}=-\ln r,\\
\tilde x_2 & = &\varphi,
\end{eqnarray*}
where
$$
r\cos\varphi=x_1,
$$
$$
r\sin\varphi=x_2.
$$
Therefore,
\begin{eqnarray}
x_1&=&e^{-\tilde x_1}\cos\tilde x_2,\label{xtx_1}\\
x_2&=&e^{-\tilde x_1}\sin\tilde x_2.\label{xtx_2}
\end{eqnarray}
The Jacobian matrix  of this transformation is
$$
\frac{\partial(\tilde x_1,\tilde x_2)}{\partial(x_1,x_2)}=
\left(\frac{\partial(x_1,x_2)}{\partial(\tilde x_1,\tilde x_2)}\right)^{-1}=
\begin{bmatrix}
-e^{\tilde x_1}\cos \tilde x_2 & -e^{\tilde x_1}\sin \tilde x_2\\
-e^{\tilde x_1}\sin \tilde x_2 & e^{\tilde x_1}\cos \tilde x_2
\end{bmatrix}.
$$
Moreover, $\Delta\tilde x_1=\Delta\tilde x_2=0$.

The image of the ray $c_k, k=1,\dots,n,$ under this map is the straight-line
$$
\tilde c_k=\{(\tilde x_1,\tilde x_2):\tilde x_1\in\Bbb R, \tilde x_2=\varphi_k\}.
$$
Clearly, the image of $S_k$ is the strip $\tilde S_k=\{(\tilde x_1,\tilde x_2):\tilde x_1\in\Bbb R, \varphi_k\leq\tilde x_2\leq\varphi_{k+1}\}$.
As above we put $\varphi_{nl+k}=\varphi_k, \tilde c_{nl+k}=\tilde c_{k},\ k=1,\dots,n, l\in{\Bbb N}.$

Note that if the function $f$ is twice continuously differentiable on $\Bbb R^2\setminus{\{0\}}$, then by the Ito formula
\begin{equation}
df(x(t))=(\nabla f,dw(t))+\sum_{k=1}^n\gamma_k(\nabla
f,v_k)dL_{x}^{c_k}(t)+\frac{1}{2}\Delta f(x(t))dt, \ t<\zeta.\label{df}
\end{equation}
So
\begin{equation}
\begin{split}d\tilde{x}_1(t)=-e^{\tilde{x}_1(t)}(\cos\tilde{x}_2(t)dw_1(t)+\sin\tilde{x}_2(t)dw_2(t))-\\
-e^{\tilde{x}_1(t)}\sum_{k=1}^n\gamma_k \left(v_{k1}\cos\tilde{x}_2(t)
+v_{k2}\sin\tilde{x}_2(t)\right)dL_{x}^{c_k}(t), \ t<\zeta,\label{tx_1}
\end{split}
\end{equation}
where $(v_{k1},v_{k2})$ are the cartesian coordinates of the vector $v_k$.
As easily seen,
$v_{k1}=-\frac{\sin(\varphi_k+\theta_k)}{\cos\theta_k},v_{k2}=\frac{\cos(\varphi_k+\theta_k)}{\cos\theta_k}.$
The process $(L_{x}^{c_k}(t))_{t\geq 0}$ increases only at those
moments of time when $\tilde x\in \tilde c_k$ and, consequently, $\tilde
x_2=\varphi_k$. Besides, the process $\tilde
w_1(t)=-\cos\tilde{x}_2(t)dw_1(t)-\sin\tilde{x}_2(t)dw_2(t), t\geq
0,$ is a Wiener process as a square integrable martingale with its
characteristic being equal to $t$. Thus, we can rewrite the equation
(\ref{tx_1}) as follows
\begin{equation}
d\tilde{x}_1(t)=e^{\tilde{x}_1(t)}d\tilde
w_1(t)+e^{\tilde{x}_1(t)}\sum_{k=1}^n\gamma_k\tan\theta_k
dL_{x}^{c_k}(t).\label{tx_11}
\end{equation}
By analogy,
\begin{equation}
d\tilde{x}_2(t)=e^{\tilde{x}_1(t)}d\tilde
w_2(t)+e^{\tilde{x}_1(t)}\sum_{k=1}^n\gamma_k
dL_{x}^{c_k}(t),\label{tx_12}
\end{equation}
where
$\tilde w_2(t)=\cos\tilde{x}_2(t)dw_2(t)-\sin\tilde{x}_2(t)dw_1(t),\ t\geq 0,$
is a one-dimensional Wiener process independent of the process
$(\tilde w_1(t))_{t\geq 0}$.

From (\ref{tx_11}), (\ref{tx_12}) we get the equation
\begin{equation}
d\tilde x(t)=e^{\tilde x_1(t)}d\tilde w(t)+e^{\tilde x_1(t)}\sum_{k=1}^n\gamma_k \tilde v_kdL_x^{c_k}(t),\label{tx}
\end{equation}
where $\tilde w(t)=(\tilde w_1(t),\tilde w_2(t)),t\geq 0; \tilde v_k=(\tan\theta_k,1),
k=1,\hdots,n$.

We emphasize that  $L_x^{c_k}$ is not a local time of the process $\tilde x$ but that of the process $x$. Let us give a form of equation (\ref{tx})  that involves $L_{\tilde x}^{\tilde c_k}$ being a local time of the process $\tilde x$ at the line $\tilde c_k$ instead of $L_x^{c_k}, k=1,\dots,n$.

To do this we make use of the following lemma.
\begin{lemma} Let $u(t), \ t\in[0,T],$  be a continuous function on $[0,T]$ such that
for all  $t\in[0,T]$ there exists a limit
$$
\lim_{\varepsilon\downarrow 0}\frac{1}{2\varepsilon}\int_0^t {\hbox{\rm {1}\!\!\!\!\;\,\rm {I}}}_{[-\varepsilon, \varepsilon]}(u(s))ds=:L(t).
$$
Suppose $f$ is a continuous function on $[0,T]$ with positive
values, $g(s,\varepsilon)$ is a function on $[0,T]\times[0,T]$ with
non-negative values such that $g(s,\varepsilon)/\varepsilon\to f(s)$ as
$\varepsilon\downarrow 0$ uniformly in $s$. Then for all $t\in[0.T]$
$$
\lim_{\varepsilon\downarrow 0}\frac{1}{2\varepsilon}\int_0^t {\hbox{\rm {1}\!\!\!\!\;\,\rm {I}}}_{[-g(s,\varepsilon), g(s,\varepsilon)]}(u(s))ds=
\int_0^t f(s)dL(s).
$$\label{Lemma 2}
\end{lemma}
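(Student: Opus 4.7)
The plan is to reduce everything to the defining limit of $L(t)$ by sandwiching $g(s,\varepsilon)$ between constants times $\varepsilon$ on small subintervals where $f$ is approximately constant. The point is that for any fixed constant $a>0$, the definition of $L$ gives
$$
\lim_{\varepsilon\downarrow 0}\frac{1}{2\varepsilon}\int_0^{t}{\hbox{1\!\!\!\!\;\,{I}}}_{[-\varepsilon a,\varepsilon a]}(u(s))\,ds
=a\,L(t),
$$
simply by the change of variable $\varepsilon'=\varepsilon a$.

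First, fix $\eta>0$ small enough that $\eta<\tfrac12\min_{[0,T]}f$. By the uniform convergence of $g(s,\varepsilon)/\varepsilon$ to $f(s)$, there is $\varepsilon_0>0$ such that $\varepsilon(f(s)-\eta)\le g(s,\varepsilon)\le \varepsilon(f(s)+\eta)$ for all $s\in[0,T]$ and $\varepsilon\in(0,\varepsilon_0)$. Next, using uniform continuity of $f$, partition $[0,t]$ by $0=t_0<t_1<\dots<t_N=t$ so that the oscillation of $f$ on each $[t_{i-1},t_i]$ is at most $\eta$; write $f_i^{\pm}$ for the sup/inf of $f$ on the $i$-th subinterval. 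Then on $[t_{i-1},t_i]$ one has the pointwise bound
$$
{\hbox{1\!\!\!\!\;\,{I}}}_{[-\varepsilon(f_i^{-}-\eta),\varepsilon(f_i^{-}-\eta)]}(u(s))
\le
{\hbox{1\!\!\!\!\;\,{I}}}_{[-g(s,\varepsilon),g(s,\varepsilon)]}(u(s))
\le
{\hbox{1\!\!\!\!\;\,{I}}}_{[-\varepsilon(f_i^{+}+\eta),\varepsilon(f_i^{+}+\eta)]}(u(s)),
$$
where the constants inside the indicators are positive by the choice of $\eta$.

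Integrating over $[t_{i-1},t_i]$, dividing by $2\varepsilon$, and applying the scaled version of the defining limit to each of the constant-level-set integrals yields, for every $i$,
$$
(f_i^{-}-\eta)\bigl(L(t_i)-L(t_{i-1})\bigr)
\le\liminf_{\varepsilon\downarrow 0}\frac{1}{2\varepsilon}\int_{t_{i-1}}^{t_i}{\hbox{1\!\!\!\!\;\,{I}}}_{[-g(s,\varepsilon),g(s,\varepsilon)]}(u(s))\,ds,
$$
with the symmetric upper bound $(f_i^{+}+\eta)(L(t_i)-L(t_{i-1}))$ for the $\limsup$. Summing over $i$ and noting that $L$ is nondecreasing (as a limit of nondecreasing functions), the lower and upper Riemann--Stieltjes sums on the right converge, as the partition is refined and $\eta\downarrow 0$, to the common value $\int_0^{t}f(s)\,dL(s)$, which is well defined because $f$ is continuous and $L$ is of bounded variation on $[0,T]$.

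The only real care is in the sandwich step: one needs the lower bound $f_i^{-}-\eta$ to stay positive so that the inner indicator makes sense and the scaling $\varepsilon\mapsto\varepsilon(f_i^{-}-\eta)$ is legitimate, which is why positivity of $f$ and the explicit choice $\eta<\tfrac12\min f$ are used. Everything else is a straightforward interchange of limits justified by the uniformity in $s$ of $g(s,\varepsilon)/\varepsilon\to f(s)$ together with continuity of $f$.
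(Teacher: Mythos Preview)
Your proof is correct and follows essentially the same strategy as the paper's: reduce to the constant-width case $g(s,\varepsilon)=a\varepsilon$ via the scaling $\varepsilon\mapsto a\varepsilon$, then squeeze using piecewise-constant approximations of $f$ and the uniform convergence $g(s,\varepsilon)/\varepsilon\to f(s)$. The only organizational difference is that the paper splits the argument into two stages---first establishing the result for $g(s,\varepsilon)=\varepsilon f(s)$ by sandwiching $f$ between step functions, and then reducing the general $g$ to this case via the multiplicative bounds $\varepsilon(1-\delta)f(s)\le g(s,\varepsilon)\le\varepsilon(1+\delta)f(s)$---whereas you merge both steps by directly sandwiching $g$ between the constants $\varepsilon(f_i^{\pm}\pm\eta)$ on each subinterval; this is a cosmetic difference and the underlying idea is identical.
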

\begin{proof} (i) The statement of Lemma is easily justified for
$g(s,\varepsilon)=\varepsilon{\hbox{1\!\!\!\!\;\,{I}}}_{[a,b]}(s)$, where $a, b$ are
 constant, $0\leq a<b$, and, consequently, it is true for linear combinations of such functions.

(ii) Let $\underset{\!\!\!\!\!\bar{ } }{f_n}, \bar f_n$ be sequences of piecewise
constant functions on $[0,T]$ such that $\underset{\!\!\!\!\!\bar{ } }{f_n}\leq f
\leq\bar f_n$ and $\underset{\!\!\!\!\!\bar{ } }{f_n}\rightarrow f, \bar
f_n\rightarrow f$ as $n\rightarrow\infty$ uniformly. By (i) we
have
$$\int_0^t \underset{\!\!\!\!\!\bar{ } }{f_n}(s)dL(s)=
\lim_{\varepsilon\downarrow 0}\frac{1}{2\varepsilon}\int_0^t
{\hbox{1\!\!\!\!\;\,{I}}}_{[-\varepsilon\underset{\!\!\!\!\!\bar{ } }{f_n}(s),\varepsilon{\underset{\!\!\!\!\!\bar{ } }{f_n}}(s)
]}(u(s))ds\leq
$$
$$
\leq\varliminf_{\varepsilon\downarrow 0}\frac{1}{2\varepsilon}\int_0^t {\hbox{1\!\!\!\!\;\,{I}}}_{[-\varepsilon{f}(s),\varepsilon{f}(s) ]}(u(s))ds
\leq\varlimsup_{\varepsilon\downarrow 0}\frac{1}{2\varepsilon}\int_0^t {\hbox{1\!\!\!\!\;\,{I}}}_{[-\varepsilon{f}(s),\varepsilon{f}(s) ]}(u(s))ds\leq
$$
$$
\leq\lim_{\varepsilon\downarrow 0}\frac{1}{2\varepsilon}\int_0^t
{\hbox{1\!\!\!\!\;\,{I}}}_{[-\varepsilon\bar{f}_n(s),\varepsilon\bar{f}_n(s)
]}(u(s))ds=\int_0^t\bar{f}_n(s)dL(s).
$$
Passing to the limit as $n\rightarrow\infty$ we get
$$
\lim_{\varepsilon\downarrow 0}\frac{1}{2\varepsilon}\int_0^t {\hbox{1\!\!\!\!\;\,{I}}}_{[-\varepsilon{f}(s),\varepsilon{f}(s) ]}(u(s))ds=
\int_0^t f(s)dL(s).
$$

(iii) Since $\frac{g(s,\varepsilon)}{\varepsilon}\to f(s)$ as $\varepsilon\downarrow 0$
uniformly in $s\in [0,T]$, then for each $t\in[0,T],\ \delta>0$ there exists
$\varepsilon_{\delta}>0$ such that for all $\varepsilon<\varepsilon_{\delta}$ the
inequalities
$$
\varepsilon(1-\delta)f(s)\leq g(s,\varepsilon)\leq \varepsilon(1+\delta)f(s)
$$
hold for all $s\in[0,t].$
Similarly to (ii) we have
$$
(1-\delta)\int_0^t f(s)dL(s)\leq \varliminf_{\varepsilon\downarrow
0}\frac{1}{2\varepsilon}\int_0^t {\hbox{1\!\!\!\!\;\,{I}}}_{[- g(s,\varepsilon),
g(s,\varepsilon) ]}(u(s))ds\leq
$$
$$
\leq\varlimsup_{\varepsilon\downarrow 0}\frac{1}{2\varepsilon}\int_0^t
{\hbox{1\!\!\!\!\;\,{I}}}_{[-g(s,\varepsilon), g(s,\varepsilon)]}(u(s))ds\leq
(1+\delta)\int_0^t f(s)dL(s).
$$
Letting $\delta\downarrow 0$ we obtain the statement of
the Lemma.
\end{proof}

Now let us express $L_{\tilde x}^{\tilde c_k}$ in term of $L_x^{c_k}$.

\begin{lemma} The process
$$
L_{\tilde x}^{\tilde c_k}(t)=\int_0^t e^{-\tilde x_1(s)}dL_x^{c_k}(s), t\leq
\zeta,
$$
is a local time of the process $(\tilde x(t))_{t\geq 0}$ on the straight-line $\tilde c_k$.\label{Lemma 3}
\end{lemma}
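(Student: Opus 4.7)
The plan is to read off $L_{\tilde x}^{\tilde c_k}$ from its natural occupation-time definition
$$L_{\tilde x}^{\tilde c_k}(t)=\lim_{\varepsilon\downarrow 0}\frac{1}{2\varepsilon}\int_0^t {\hbox{1\!\!\!\!\;\,{I}}}_{[\varphi_k-\varepsilon,\varphi_k+\varepsilon]}(\tilde x_2(s))\,ds$$
and convert the strip $\{|\tilde x_2-\varphi_k|\leq\varepsilon\}$ in the transformed coordinates back to a state-dependent normal strip around the ray $c_k$ in the original plane, so that Lemma \ref{Lemma 2} applies with $L_x^{c_k}$ playing the role of $L$.

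Since $\tilde x_2(s)=\varphi(s)$, the set $\{|\tilde x_2-\varphi_k|\leq\varepsilon\}$ pulls back to the angular wedge of half-opening $\varepsilon$ about $c_k$. The signed normal distance from $x=(r\cos\varphi,r\sin\varphi)$ to the line through $c_k$ is $d_k(x):=(x,n_k)=r\sin(\varphi-\varphi_k)$, so membership in this angular wedge is equivalent (for $\varphi$ in the half-plane on the $+n_k$ side of the origin, which holds locally near $c_k$) to $|d_k(x)|\leq r\sin\varepsilon$. Setting $u(s)=d_k(x(s))$, $f(s)=r(s)=e^{-\tilde x_1(s)}$, and $g(s,\varepsilon)=r(s)\sin\varepsilon$, one has $g(s,\varepsilon)/\varepsilon\to f(s)$ uniformly on any interval where $r(s)$ is bounded, so Lemma \ref{Lemma 2} yields
$$\lim_{\varepsilon\downarrow 0}\frac{1}{2\varepsilon}\int_0^t {\hbox{1\!\!\!\!\;\,{I}}}_{[-g(s,\varepsilon),g(s,\varepsilon)]}(u(s))\,ds=\int_0^t r(s)\,dL(s),$$
where $L$ is the one-dimensional local time of $u$ at zero. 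Comparing the definition (\ref{L^c_k}) of $L_x^{c_k}$ with the set $\{|d_k(x)|\leq\varepsilon\}$---these differ only by a piece having negative projection on $c_k$, which the process does not visit locally near the ray---gives $L(t)=L_x^{c_k}(t)$, and hence $L_{\tilde x}^{\tilde c_k}(t)=\int_0^t e^{-\tilde x_1(s)}\,dL_x^{c_k}(s)$ on the required interval.

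The main technical obstacle is the localization needed to satisfy the hypotheses of Lemma \ref{Lemma 2}: $f(s)=r(s)$ must be continuous and strictly positive, and the convergence $g(s,\varepsilon)/\varepsilon\to f(s)$ must be uniform in $s$. This is arranged by stopping at $\tau_a=\inf\{s : r(s)\notin[a,1/a]\}$, applying Lemma \ref{Lemma 2} on $[0,T\wedge\tau_a]$, and then letting $a\downarrow 0$ and $T\uparrow\zeta$. Because $\zeta$ is by construction the first hitting time of the origin and the process does not reach infinity in finite time (cf.\ the concluding remark in the proof of Lemma \ref{Lemma 1}), $\tau_a\uparrow\zeta$ almost surely, and the identity extends to all $t<\zeta$, completing the proof.
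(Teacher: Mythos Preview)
Your argument is correct and follows essentially the same route as the paper's own proof: start from the occupation-time definition of $L_{\tilde x}^{\tilde c_k}$, rewrite the angular strip $\{|\tilde x_2-\varphi_k|\le\varepsilon\}$ as a state-dependent normal strip $\{|d_k(x)|\le r\sin\varepsilon\}$ around $c_k$, and invoke Lemma~\ref{Lemma 2} with $g(s,\varepsilon)=r(s)\sin\varepsilon$ and $f(s)=e^{-\tilde x_1(s)}$. The paper carries this out for $k=1$ (where $d_1(x)=x_2$) and declares the other cases analogous; you formulate it uniformly via $d_k(x)=r\sin(\varphi-\varphi_k)$. Your explicit localization by $\tau_a$ to guarantee the uniform convergence hypothesis of Lemma~\ref{Lemma 2}, and your remark on the ray-versus-line discrepancy, are points the paper passes over silently, so on those details your write-up is in fact slightly more careful than the original.
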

\begin{proof}  Let $k=1$. According to (\ref{xtx_2})
$$
L_{\tilde x}^{\tilde c_1}(t)=\lim_{\varepsilon\downarrow 0}\frac{1}{2\varepsilon}\int_0^t
{\hbox{1\!\!\!\!\;\,{I}}}_{[-\varepsilon,\varepsilon]}(\tilde x_2(s))ds=
$$
$$
=\lim_{\varepsilon\downarrow
0}\frac{1}{2\varepsilon}\int_0^t{\hbox{1\!\!\!\!\;\,{I}}}_{[-e^{-\tilde x_1(s)}\sin\varepsilon,
e^{-\tilde x_1(s)}\sin\varepsilon]}\hbox{1\!\!\!\!\;\,{I}}_{\{\cos \tilde x_2(s)>0\}}(e^{-\tilde x_1(s)}\sin \tilde x_2(s))ds=
$$
$$
=\lim_{\varepsilon\downarrow
0}\frac{1}{2\varepsilon}\int_0^t{\hbox{1\!\!\!\!\;\,{I}}}_{[-e^{-\tilde x_1(s)}\sin\varepsilon, e^{-\tilde x_1(s)}\sin\varepsilon]}(x_2(s))ds=
$$
$$
=\int_0^t e^{-\tilde x_1(s)}dL_x^{c_1}(s).
$$
The last equality follows from Lemma \ref{Lemma 2}. This proves Lemma \ref{Lemma 3} in the case of $k=1$. The statement of Lemma for $k=2,\dots,n,$ can be obtained analogously.
\end{proof}

Using this Lemma we get the form of equation (\ref{tx}) involving only the process $\tilde x$ as follows
\begin{equation}
d\tilde x(t)=e^{\tilde x_1(t)}d\tilde w(t)+e^{2\tilde x_1(t)}\sum_{k=1}^n\gamma_k
\tilde v_kdL_{\tilde x}^{\tilde c_k}(t).\label{tx_final}
\end{equation}

Let us make a change of time. Put $A(t)=\int_0^t e^{2\tilde x_1(s)}ds,\
t<\zeta,$ and denote $\eta=\int_0^{\zeta} e^{2\tilde x_1(s)}ds$. Define
the process
$$
\hat x(t)=\tilde x(A^{-1}(t)),\ t\leq\eta,
$$
where
$$
A^{-1}(t)=\inf\{s:\int_0^se^{2\tilde x_1(u)}du\geq t\}.
$$

The equation (\ref{tx_final}) implies the stochastic integral
equation
\begin{equation}
\begin{split}\tilde{x}(A^{-1}(t))=\tilde x(0)+\int_0^{A^{-1}(t)}e^{\tilde{x}_1(s)}d\tilde w(t)+\\
+\sum_{k=1}^n\gamma_k\tilde v_k\int_0^{A^{-1}(t)}
e^{2\tilde{x}_1(s)}dL_{\tilde x}^{\tilde c_k}(s), \ t\leq\eta.
\end{split} \label{tx_A{-1}}
\end{equation}
Similarly to \cite{IW}, Theorem 7.2, it can be proved that the
process $\hat w(t)=\int_0^{A^{-1}(t)}e^{\tilde{x}_1(s)}d\tilde w(s)$
is a Wiener process in $\Bbb R^2$ up to the time $\eta$.

\begin{lemma} \label{Lemma 4} The process
$$
L_{\hat x}^{\tilde c_k}(t)=\int_0^{A^{-1}(t)}e^{2\tilde x_1(s)}dL_{\tilde x}^{\tilde c_k}(s), \ t\leq\eta,
$$
is a local time of the process $(\hat x(t))_{t<\eta}$ on the
straight-line $\tilde c_k$.
\end{lemma}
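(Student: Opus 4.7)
My plan is to reduce to the case $k=1$ by the same rotation-of-coordinates argument used in Lemma 3, and then to combine a change of time with a direct variant of Lemma 2. Writing $\tilde c_1=\{\tilde x_2=0\}$, the definition of local time at $\tilde c_1$ reads
$$L_{\hat x}^{\tilde c_1}(t)=\lim_{\varepsilon\downarrow 0}\frac{1}{2\varepsilon}\int_0^t\mathbf{1}_{[-\varepsilon,\varepsilon]}(\hat x_2(u))\,du.$$
First I would change variable $u=A(s)$, $du=e^{2\tilde x_1(s)}ds$; since on $[0,\zeta)$ the map $A$ is absolutely continuous with continuous, strictly positive derivative, this substitution is legitimate, and $\hat x_2(A(s))=\tilde x_2(s)$. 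The integral becomes
$$\lim_{\varepsilon\downarrow 0}\frac{1}{2\varepsilon}\int_0^{A^{-1}(t)}e^{2\tilde x_1(s)}\,\mathbf{1}_{[-\varepsilon,\varepsilon]}(\tilde x_2(s))\,ds.$$

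Second, I would identify this limit with $\int_0^{A^{-1}(t)}e^{2\tilde x_1(s)}\,dL_{\tilde x}^{\tilde c_1}(s)$ by mimicking the proof of Lemma 2, with $s\mapsto e^{2\tilde x_1(s)}$ playing the role of the continuous multiplier $f$. The only difference is that here $f$ sits outside the indicator as a multiplicative factor rather than inside as a rescaling of the window, but the sandwich argument adapts directly: given piecewise constant approximations $\underline{f}_n\le e^{2\tilde x_1(\cdot)}\le\overline{f}_n$ that converge uniformly on $[0,A^{-1}(t)]$, on each constant piece $[a,b]$ of value $c$ one has
$$\lim_{\varepsilon\downarrow 0}\frac{c}{2\varepsilon}\int_a^b\mathbf{1}_{[-\varepsilon,\varepsilon]}(\tilde x_2(s))\,ds=c\bigl(L_{\tilde x}^{\tilde c_1}(b)-L_{\tilde x}^{\tilde c_1}(a)\bigr)=\int_a^b c\,dL_{\tilde x}^{\tilde c_1}(s)$$
by the very definition of $L_{\tilde x}^{\tilde c_1}$. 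Summing over pieces, inserting the resulting bounds into $\liminf$ and $\limsup$, and finally letting $n\to\infty$ squeezes the middle quantity to $\int_0^{A^{-1}(t)}e^{2\tilde x_1(s)}\,dL_{\tilde x}^{\tilde c_1}(s)$.

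The only step requiring some care is the uniform continuity of $e^{2\tilde x_1(\cdot)}$ on $[0,A^{-1}(t)]$, needed to produce the uniform piecewise-constant sandwich. This is automatic because $\tilde x$ is a continuous process on $[0,\zeta)$ and, by the definition of $\eta$, for each $t<\eta$ the endpoint $A^{-1}(t)$ lies strictly inside $[0,\zeta)$, so $\tilde x_1$ is bounded on that closed subinterval. The extension to general $k$ is obtained by the same rotation of coordinates used at the end of the proof of Lemma 3, which maps $\tilde c_k$ to $\tilde c_1$ without changing either the process $\hat x$ in law or the form of the integrand.
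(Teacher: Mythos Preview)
Your proof is correct and follows essentially the same route as the paper: start from the definition of $L_{\hat x}^{\tilde c_1}$, change variables via $u=A(s)$ to pull back to the $\tilde x$-time scale, and then identify the resulting weighted limit as the integral against $dL_{\tilde x}^{\tilde c_1}$. The paper asserts that last identification without explanation, whereas you supply the sandwich argument explicitly; one small inaccuracy is that Lemma~3 does not in fact use a rotation --- the passage to general $k$ there (and here) is handled simply by replacing $0$ with $\varphi_k$ in the computation.
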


\begin{proof} For $k=1$ we have
$$
L_{\hat x}^{\tilde c_1}(t)=\lim_{\varepsilon\downarrow 0}\frac{1}{2\varepsilon}\int_0^t
{\hbox{1\!\!\!\!\;\,{I}}}_{[-\varepsilon,\varepsilon]}(\hat x_2(s))ds=
$$
$$
=\lim_{\varepsilon\downarrow 0}\frac{1}{2\varepsilon}\int_0^t
{\hbox{1\!\!\!\!\;\,{I}}}_{[-\varepsilon,\varepsilon]}(\tilde x_2(A^{-1}(s)))ds=
$$
$$
=\lim_{\varepsilon\downarrow 0}\frac{1}{2\varepsilon}\int_0^{A^{-1}(t)}
{\hbox{1\!\!\!\!\;\,{I}}}_{[-\varepsilon,\varepsilon]}(\tilde x_2(s))e^{2\tilde x_1(s)}ds=
$$
$$
=\int_0^{A^{-1}(t)}e^{2\tilde x_1(s)}dL_{\tilde x}^{\tilde c_1}(s).
$$
The cases of $k=2,\dots,n$ can be treated analogously.
\end{proof}

Thus using (\ref{tx_A{-1}}) we get the stochastic differential
equation for the process $(\hat x(t))_{t<\eta}$ as follows
\begin{equation}
d\hat x(t)=d\hat w(t)+\sum_{k=1}^n\gamma_k \tilde v_k
dL_{\hat x}^{\tilde c_k}(t)\label{hx}.
\end{equation}

\begin{remark} \label{Remark 2} The  process $(\hat x(t))_{t<\eta}$ is a continuous
Markov one as a result of the random change of time for the Markov
process $(\tilde x(t))_{t\geq0}$ which is continuous (cf. \cite{D}).
So, either $\hat x(t)$ is defined for all $t\geq 0$ or
$\varlimsup_{t\to\eta-}|\hat x(t)|=\infty$. It is easy to see that any solution to SDE (\ref{hx}) does not blow up in finite time a.s. Moreover, in the
next Section we will show that $\Bbb EL_{\hat x_2}^{\tilde c_k}(t)<\infty$
for each $t\geq 0$ (see (\ref{M_k})). Then, by (\ref{hx}), the
trajectories of $\hat x(t)$ do not reach infinity in finite time a.s.
Thus the process $\hat x(t)$ is defined for all $t\geq 0$, i.e. $\Bbb P\{\eta=\infty\}=1$.
\end{remark}

\section {Ergodic behavior of some Markov chain related to the Brownian motion with membranes. }

For $k\in \Bbb N$, let $p_k=\frac{1+\gamma_k}{2},
q_k=\frac{1-\gamma_k}{2}.$ Recall that $\xi_k=\varphi_{k+1}-\varphi_k, \ \tau_m, m\in \Bbb N,$ is defined in the proof of Lemma \ref{Lemma 1}. Put $\xi_0:=\xi_n.$

The process $\hat x_2(t)$
behaves on $(\tau_m,\tau_{m+1}), \ m=1,2,\dots,$ as an ordinary
skew Brownian motion with skewing at the point
$\hat x_2(\tau_m)\in\{\varphi_1,\dots,\varphi_n\}$ (Fig. \ref{Fig. 2}). The skew Brownian motion is a strong Markov
process as a continuous  Feller process (cf. \cite{D}, Theorem 3.10). This yields
$(\hat x_2(\tau_m))_{m\geq1}$ form a homogeneous  Markov chain with the phase space $\{\varphi_1,\dots,\varphi_n\}$ and
the transition matrix
$$\begin{bmatrix}
0 & \tilde p_1 & 0&\hdotsfor{1}&0&\tilde q_1\\
\tilde q_2 & 0&\tilde p_2 & 0&\hdotsfor{1} &0\\
\hdotsfor[3]{6}\\
0&\hdotsfor{1}&0&\tilde q_{n-1}&0&\tilde p_{n-1}\\
\tilde p_n&0&\hdotsfor{1}&0&\tilde q_n&0
\end{bmatrix},
$$
where $$ \tilde
p_k=\Bbb P_{\varphi_{k}}\left\{\hat x_2(\sigma_{\varphi_{k-1}}\wedge\sigma_{\varphi_{k+1}})=\varphi_{k+1}\right\},\tilde q_k=1-\tilde p_k,
$$
$$
\sigma_a=\inf\{t\geq 0: \hat x_2(t)=a\}.
$$

\begin{figure}[h]
  \includegraphics[width=6 cm]{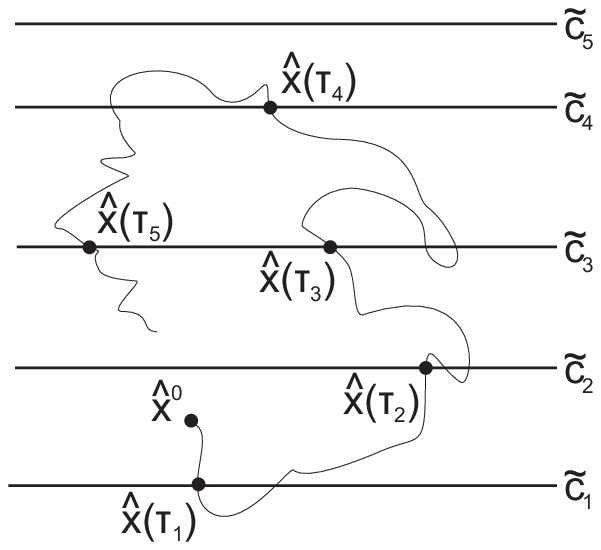}\\
  \caption{}
  \label{Fig. 2}
\end{figure}

One can see (cf. {\cite{P}, Ch. 3}) that, for $k=1,\dots,n$,
\begin{eqnarray}
\tilde p_k&=&\frac{p_k\xi_{k-1}}{p_k\xi_{k-1}+q_k\xi_{k}}\label{tp_k}, \\
\tilde q_k&=&\frac{q_k\xi_{k}}{p_k\xi_{k-1}+q_k\xi_{k}},\label{tq_k}.
\end{eqnarray}

Now we find $
M_k:=\Bbb{E}_{\varphi_k}L_{\hat x_2}^{\varphi_k}(\sigma_{\varphi_{k-1}}\wedge\sigma_{\varphi_{k+1}}).$
Equation (\ref{hx}) implies
$$
\Bbb
E_{\varphi_k}\hat x_2(\sigma_{\varphi_{k-1}}\wedge\sigma_{\varphi_{k+1}})=\varphi_k+\gamma_k\Bbb{E}_{\varphi_k}L_{\hat x_2}^{\tilde c_k}(\sigma_{\varphi_{k-1}}\wedge\sigma_{\varphi_{k+1}}).
$$
Using (\ref{tp_k}),(\ref{tq_k}) we get
\begin{equation}
M_k=\frac{\xi_{k-1}\xi_k}{p_k\xi_{k-1}+q_k\xi_k}.\label{M_k}
\end{equation}
{\bf 3.1.} Assume $0<p_k<1, \ k=1,\dots,n$. The Markov chain $(\hat x_2(\tau_m))_{m\geq 1}$ is irreducible and
all of its states are positive-recurrent. This implies the
existence of a unique invariant distribution $(\pi_1,\dots,\pi_n)$
which is the solution to the following system of equations
\begin{eqnarray}
\pi_k&=&\pi_{k-1}\tilde p_{k-1}+\pi_{k+1}\tilde q_{k+1}, \ k=2,\dots,n-1,\label{pi_k}\\
\pi_1&=&\pi_n\tilde p_n+\pi_2\tilde q_2,\label{pi_1}\\
\sum_{k=1}^n\pi_k&=&1.\label{sum}
\end{eqnarray}
Let us find it out.  We can rewrite the equation (\ref{pi_k}) in
the form
$$
\pi_k(\tilde p_k+\tilde q_k)=\pi_{k-1}\tilde p_{k-1}+\pi_{k+1}\tilde q_{k+1},
k=2,\dots,n-1.
$$
This implies the equality
$$
\pi_{k+1}\tilde q_{k+1}-\pi_k\tilde p_k=\pi_k\tilde q_k-\pi_{k-1}\tilde p_{k-1}
$$
valid for $k=2,\dots,n-1$. Put
\begin{equation}
C:=\pi_{k+1}\tilde q_{k+1}-\pi_k\tilde p_k, \ k=1,\dots,n-1.\label{C}
\end{equation}
Let $k=1$. Then from (\ref{C}) we have
\begin{equation}
\pi_2=C\frac{1}{\tilde q_2}+\pi_1\frac{\tilde p_1}{\tilde q_2}. \label{pi_2}
\end{equation}
Put $k=2$ in (\ref{C}). Taking into account (\ref{pi_2}) we obtain
$$
\pi_3=C\left(\frac{1}{\tilde q_3}+\frac{\tilde p_2}{\tilde q_2\tilde q_3}\right)+\pi_1\frac{\tilde p_1\tilde p_2}{\tilde q_2\tilde q_3}.
$$

Finally by recursion we arrive at the formula
$$
\pi_k=C\left( \frac{1}{\tilde q_k}+\frac{\tilde p_{k-1}}{\tilde q_{k-1}\tilde q_k}+\cdots+\frac{\tilde p_2\cdot\hdots\cdot \tilde p_{k-1}}{\tilde q_2\cdot\hdots\cdot\tilde q_k}\right)+\pi_1\frac{\tilde p_1\cdot\hdots\cdot \tilde p_{k-1}}{\tilde q_2\cdot\hdots\cdot\tilde q_k}
$$
justified for $k=2,\dots,n$.

Substituting this expression for $\pi_n$ in the equality
$$
\pi_1\tilde q_1-\pi_n\tilde p_n=C,
$$
which is a simple consequence of (\ref{pi_1}), we get
$$
\pi_1=\frac{1+\tilde r_n+\tilde r_{n-1}\tilde r_n+\cdots+\tilde r_2\cdot\hdots\cdot
\tilde r_n}{\tilde q_1(1-\tilde r_1\cdot\hdots\cdot \tilde r_n)}\ C , \hbox{ if }
\tilde p_1\cdot\hdots\cdot\tilde p_n\neq\tilde q_1\cdot\hdots\cdot\tilde q_n,
$$
where  $\tilde r_k=\frac{\tilde p_k}{\tilde q_k}, k=1,2,\dots,n$.
Note that $\tilde p_1\cdot\hdots\cdot\tilde p_n\neq\tilde q_1\cdot\hdots\cdot\tilde q_n$ if and only if $p_1\cdot\hdots\cdot p_n\neq q_1\cdot\hdots\cdot q_n$.
Taking into account that we can chose the ray $\tilde c_1$ in an arbitrary way, we get for $k=1,2,\dots,n  $
\begin{equation*}
\pi_k=\frac{1+\tilde r_{k-1}+\tilde r_{k-2}\tilde r_{k-1}+\cdots+\tilde r_1\cdot\hdots\cdot \tilde r_{k-1}\tilde r_{k+1}\cdot\hdots\cdot \tilde r_n}{\tilde q_k(1-\tilde r_1\cdot\hdots\cdot \tilde r_n)}\ C,
\end{equation*}
if $p_1\cdot\hdots\cdot p_n\neq q_1\cdot\hdots\cdot q_n,$

One can find the value of $C$ by substitution of  the last formula into (\ref{sum}).
Computations lead to the formula
\begin{equation}
\pi_k=C_1\beta_k,\label{pi_kC}
\end{equation}
where
$$
\beta_k=\frac{\sum_{j=0}^{n-1}q_{k+1}\cdot\hdots\cdot q_{k+j}p_{k+j+1}\cdot\hdots\cdot p_{n+k-1}\xi_{k+1}\cdot\hdots\cdot \xi_{k+j}\xi_{k+j}\cdot\hdots\cdot \xi_{n+k-2}}{\prod_{i=1, \ i\neq k}^n(p_i\xi_{i-1}+q_i\xi_i)},
$$
$$
C_1=1/\sum_{k=1}^n\beta_k.
$$
If $p_1\cdot\hdots\cdot p_n=q_1\cdot\hdots\cdot q_n,$ it is
easy to see that
\begin{eqnarray}
\begin{split}
\pi_1&=&\frac{1}{1+\frac{\tilde p_1}{\tilde q_2}+\frac{\tilde p_1\tilde p_2}{\tilde q_2\tilde q_3}+\cdots+\frac{\tilde p_1\cdot\hdots\cdot\tilde p_{n-1}}{\tilde q_2\cdot\hdots\cdot \tilde q_{n}}}&,\\
\pi_k&=&\pi_1\frac{\tilde p_1\cdot\hdots\cdot\tilde p_{k-1}}{\tilde q_2\cdot\hdots\cdot\tilde q_k},&\ k=2,\dots,n.\label{pi_kC0}
\end{split}
\end{eqnarray}

Now suppose $0<p_k\leq1, \ k=1,\dots,n,$ or $0\leq p_k<1, \ k=1,\dots,n.$  It is easy to check that formula (\ref{pi_kC}) has no singularities in this case.

Thus we have proved the following statement

\begin{lemma}\label{Lemma 5} Let, for $k=1,\dots,n, \ 0\leq p_k<1$ or let, for $k=1,\dots,n, \ 0< p_k\leq1$. Then there
exists a unique invariant distribution  $(\pi_k)_{k=1}^n$ of the
Markov chain $(\hat x_2(\tau_m))_{m\geq1}.$

If $p_1\cdot\hdots\cdot p_n\neq q_1\cdot\hdots\cdot q_n$ this invariant distribution is given by formula (\ref{pi_kC}), $k=1,\dots,n$.

If $p_1\cdot\hdots\cdot p_n= q_1\cdot\hdots\cdot q_n$ the invariant distribution is given by formula (\ref{pi_kC0}), $k=1,\dots,n$.
\end{lemma}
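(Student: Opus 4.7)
The plan is to derive the invariant distribution in two steps: first show existence and uniqueness via irreducibility of the finite Markov chain, then solve the balance equations (\ref{pi_k})--(\ref{sum}) explicitly by a telescoping argument. The main subtlety is extending the calculation, sketched in the text for $0<p_k<1$, to the boundary of the parameter region where some $p_k\in\{0,1\}$.

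First I would check irreducibility of $(\hat x_2(\tau_m))$ on the finite state space $\{\varphi_1,\dots,\varphi_n\}$. From (\ref{tp_k})--(\ref{tq_k}) and $\xi_k>0$, one sees that $\tilde p_k>0$ iff $p_k>0$ and $\tilde q_k>0$ iff $p_k<1$. Under the first hypothesis ($0\le p_k<1$ for all $k$) every $\tilde q_k$ is positive, so the chain can reach any state from any other by moving clockwise in at most $n-1$ steps; under the second hypothesis ($0<p_k\le 1$) counterclockwise motion is always available. In either case the chain is irreducible on a finite state space, hence positive recurrent, and therefore admits a unique invariant distribution.

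For the explicit formula I would follow the telescoping argument of the text. Rewriting (\ref{pi_k}) as $\pi_{k+1}\tilde q_{k+1}-\pi_k\tilde p_k=\pi_k\tilde q_k-\pi_{k-1}\tilde p_{k-1}$ shows this expression is a constant $C$ independent of $k$, and (\ref{pi_1}) extends the identity to the cyclic index $k=n$. Iterating the recursion expresses $\pi_k$ as an affine function of $(C,\pi_1)$ with coefficients built from the ratios $\tilde r_j=\tilde p_j/\tilde q_j$. If $\prod_k\tilde r_k\neq 1$, equivalently $p_1\cdots p_n\neq q_1\cdots q_n$, the closure equation $\pi_1\tilde q_1-\pi_n\tilde p_n=C$ determines $\pi_1$ from $C$; substituting (\ref{tp_k})--(\ref{tq_k}) and collecting the common factor $\prod_i(p_i\xi_{i-1}+q_i\xi_i)$ brings the result into the symmetric form (\ref{pi_kC}), and the normalization (\ref{sum}) fixes $C_1$. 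In the degenerate case $\prod_k\tilde r_k=1$ the closure equation forces $C=0$, and the recursion reduces to the geometric progression (\ref{pi_kC0}).

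Finally I would verify that (\ref{pi_kC}) has no singularities at the boundary. Under either hypothesis each factor $p_i\xi_{i-1}+q_i\xi_i$ in the definition of $\beta_k$ is strictly positive since $\xi_i>0$ and $p_i+q_i=1$, so every $\beta_k$ is finite and non-negative, and at least one $\beta_k$ is strictly positive (for instance the $j=0$ summand under the first hypothesis, or the $j=n-1$ summand under the second). Hence $C_1=1/\sum_k\beta_k$ is well-defined and $(C_1\beta_k)$ is a probability vector; by continuity of (\ref{pi_k})--(\ref{sum}) in the parameters $(p_k)$, this vector satisfies the balance equations on the boundary, and by the uniqueness established in the first step it must be the invariant distribution. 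The main technical nuisance is the algebraic simplification that converts the $\tilde r_j$-form into the symmetric $\beta_k$-form expressed in the original $p_k,q_k,\xi_k$; this is mechanical but tedious telescoping, and it is exactly what has been carried out in the body of this section.
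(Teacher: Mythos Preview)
Your proposal is correct and follows essentially the same route as the paper: irreducibility on the finite state space under either hypothesis, the telescoping identity $\pi_{k+1}\tilde q_{k+1}-\pi_k\tilde p_k=C$, the case split on $\prod_k\tilde r_k$, and the observation that (\ref{pi_kC}) remains well-defined at the boundary by continuity. One small slip: in your positivity check the indices are swapped---under the first hypothesis ($q_k>0$ for all $k$) it is the $j=n-1$ summand $q_{k+1}\cdots q_{k+n-1}$ that is guaranteed positive, and under the second hypothesis ($p_k>0$) it is the $j=0$ summand $p_{k+1}\cdots p_{n+k-1}$.
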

\begin{remark} The method of finding the invariant distribution was proposed by Professor Alexey Kulik.
\end{remark}

Now partial cases when some of $p_k$ are equal to $0$ and some of them are equal to $1$ are left unattended. We consider them in Subsections 3.3, 3.4.

{\bf 3.2.} Further on we make use of the following well-known result. Given a
homogeneous Markov chain with state space \{0,1,\dots, N\} and
transition matrix
$$\begin{bmatrix}
0 & 1 & 0&\hdotsfor{2}&0&\\
x_1 & 0&y_1 &0&\hdots&0\\
\hdotsfor[3]{6}\\
0&\hdots&0&x_{N-1}&0&y_{N-1}\\
0&\hdotsfor{2}&0&1&0
\end{bmatrix},
$$
where $0<x_i<1,0<y_i<1, \ i=1,\dots,N-1.$

As is known there exists a unique invariant distribution
$$\{f_i\}_{i=0}^N=\{f_ш(x_1,\dots,x_{N-1},y_1,\dots,y_{N-1},N)\}_{i=0}^N$$ where
\begin{eqnarray}
f_0&=&\frac{1}{1+\frac{1}{x_1}+\frac{y_1}{x_1x_2}+\dots+\frac{y_1\cdot\hdots\cdot
y_{N-2}}{x_1\cdot\hdots\cdot x_{N-1}}\label{f_0}
+\frac{y_1\cdot\hdots\cdot y_{N-1}}{x_1\cdot\hdots\cdot x_{N-1}}},\\
f_1&=&\frac{1}{x_1}f_0,\label{f_1}\\
f_i&=&\frac{y_1\cdot\hdots\cdot y_{i-1}}{x_1\cdot\hdots\cdot
x_{i}}f_0, \ i=2,\dots,N-1,\label{f_i}\\
f_N&=&y_{N-1}f_{N-1}.\label{f_N}
\end{eqnarray}

{\bf 3.3.} Suppose $p_{k_0}=1,\ p_{k_1}=\dots=p_{k_l}=0,\ ,
k_0,k_1,\dots, k_l\in\{1,\dots,n\}$, and $0<p_k < 1, \
k\in\{1,\dots,n\}\setminus \{k_0,k_1,\dots,k_l\}.$ Without loss
of generality we may assume that $k_0<k_1<\dots<k_l$.  Then the
Markov chain $(\hat x_2(\tau_m))_{m\geq1}$ has a unique communicating
positive-recurrent class
$\{\varphi_{k_0},\varphi_{k_0+1},\dots,\varphi_{k_1}\}.$ All other states are
transient. So there exists a unique invariant distribution
$(\pi_k)_{k=1}^n$. Besides, it is not difficult to verify that
\begin{eqnarray*}
\pi_k&=&0, \ 1\leq k<k_0 \ \hbox{ or } \ k_1<k\leq n,\\
\pi_{k_0+i}&=&f_i(\tilde q_{k_0+1},\dots,\tilde q_{k_1-1},\tilde p_{k_0+1},\dots,\tilde p_{k_1-1},k_1-k_0),\\&& i=0,1,\dots,k_1-k_0.
\end{eqnarray*}
where $f_i, i=0,1,\dots,k_1-k_0,$ is defined by formulas (\ref{f_0}) -- (\ref{f_N}).

The case of $p_{k_0}=0$ and $0<p_k\leq 1, \ k\in\{1,\dots,n\}\setminus\{k_0\}$ can be treated analogously.

{\bf 3.4.}  Suppose there exist $k_1,k_2,\dots,k_{2l}\in\{1,\dots,n\}, k_1<k_2<\dots<k_{2l},$ such that $p_{k_1}=p_{k_3}=\dots=p_{k_{2l-1}}=1,\ p_{k_2}=p_{k_4}=\dots=p_{k_{2l}}=0$, and $0<p_k<1,\ k_{2j-1}<k<k_{2j},\ j=1,\dots,l.$ Then each of the classes $\{\varphi_{k_1},\varphi_{k_1+1},\dots,\varphi_{k_2}\},\dots,$ $\{\varphi_{k_{2l-1}},\varphi_{k_{2l-1}+1},\dots,\varphi_{k_{2l}}\}$ is a communicating one for $(\hat x_2(\tau_m))_{m\geq 1}.$ (Elements of different classes are not communicate.) For each class, there exists a unique invariant distribution which can be found by formulas (\ref{f_0})--(\ref{f_N}).

Assume $\hat x_2(\tau_1)=\varphi_{k_0},\ k_{2j}<k_0<k_{2j+1}$ for some $j\in\{1,\dots,l\}.$ $(k_{2l+1}:=k_1).  $ Define
$$
\alpha(k_0)=\Bbb P_{\varphi_{k_0}}\{\hat x_2(\sigma_{\varphi_{2j}}\wedge\sigma_{\varphi_{2j+1}})
=\varphi_{2j}\}.
$$
Note that
$$
\Bbb P_{\varphi_{k_0}}\{\sigma_{\varphi_{2j}}\wedge\sigma_{\varphi_{2j+1}}<\infty)=1.
$$
If $0<p_k<1,\ k_{2j}<k<2j+1$,
computations lead to the formula
$$
\alpha(k_0)=
\frac{\sum_{i=k_0}^{k_{2j+1}}\frac{\tilde q_{k_{2j}+1}\cdot\hdots\cdot\tilde q_i}{\tilde p_{k_{2j}+1}\cdot\hdots\cdot\tilde p_i}}
{{1+\sum_{i=k_{2j}+1}^{k_{2j+1}}\frac{\tilde q_{k_{2j}+1}\cdot\hdots\cdot\tilde q_i}{\tilde p_{k_{2j}+1}\cdot\hdots\cdot\tilde p_i}}}.
$$

If $0\leq p_k<1, k_{2j}<k<k_{2j+1},$ and equality is reached, $\alpha(k_0)=1$. Similarly, if $0<p_k\leq1, k_{2j}<k<k_{2j+1},$ and equality is reached, $\alpha(k_0)=0$.

If there exist $i_1,\dots,i_s,i_{s+1},\dots,i_r$ such that $k_{2j}<i_1<\dots<i_s<k_0<i_{s+1}<\dots<i_r<k_{2j+1},$ $p_{i_1}=\dots=p_{i_s}=1,$ $p_{i_{s+1}}=\dots=p_{r}=0.$ Then
$$
\alpha(k_0)=\Bbb P_{\varphi_{k_0}}\{\hat x_2(\sigma_{\varphi_{s}}\wedge\sigma_{\varphi_{s+1}})
=\varphi_{s}\}.
 $$
\vskip 5 pt

\section {The main result. }

The behavior of the process $(x(t))_{t\geq 0}$ is uniquely
determined by that of the process $(\hat x(t))_{t\geq 0}$.  As we
know (see Remark \ref{Remark 2})  $\eta=\infty $ a.s. So $x(t)\to 0$ as
$t\to\zeta-$ if and only if $\hat x(t)\to+\infty$ as $t\to+\infty$.

Let us obtain an estimate of the local time $L_{\hat x}^{\tilde c_k}(t), \
k=1,\dots,n$. Fix $k\in\{1,\dots,n\}.$ Then the  functional
$L_{\hat x}^{\tilde c_k}(t)$ increases at those and only those intervals
of time $[\tau_m,\tau_{m+1}), m\geq 1,$ for which $\hat x(\tau_m)\in
\tilde c_k$. Moreover,
$$
L_{\hat x}^{\tilde c_k}(t)=L_{\hat x_2}^{\varphi_k}(t)=: \lim_{\varepsilon\downarrow 0}\frac{1}{2\varepsilon}\int_0^t{\hbox{1\!\!\!\!\;\,{I}}}_{[\varphi_k-\varepsilon,\varphi_k+\varepsilon]}(\hat x_2(s))ds.
$$
Let $\nu_k(t)$ be the number of
such intervals on $[0,t]$. According to the strong Law of large numbers
$$
\frac{L_{\hat x}^{\tilde c_k}(t)}{\nu_k(t)}\to \Bbb{E}_{\varphi_k}L_{\hat x_2}^{\varphi_k}(\sigma_{\varphi_{k-1}}\wedge\sigma_{\varphi_{k+1}}) \hbox{ as } t\to+\infty \hbox{ a.s.}
$$

Fix $\varepsilon>0$. Then there exists $t_{k1}(\omega)>0$ such that for all $t>t_{k1}$
the inequality
\begin{equation}
\nu_k(t)M_k(1-\varepsilon)\leq
L_{\hat x}^{\tilde c_k}(t)\leq\nu_k(t)M_k(1+\varepsilon)\label{Lve}
\end{equation}
holds. Recall that
$M_k:=\Bbb{E}_{\varphi_k}L_{\hat x_2}^{\varphi_k}(\sigma_{\varphi_{k-1}}\wedge\sigma_{\varphi_{k+1}})$
and  it is defined by formula (\ref{M_k}).

Further, by the renewal theorem (cf. \cite{F}, Ch. XI),
$$
\frac{\Bbb E\nu_k(t)}{t}\to K\pi_k \hbox{ as } t\to\infty,
$$
where $K$ is some positive constant. Using the strong law of large
numbers we get
$$
\frac{\nu_k(t)}{t}\to K\pi_k \hbox { as } t\to \infty \hbox{ a.s.}
$$
Thus there exists $t_{k2}\geq t_{k1}$ such that for all $t>t_{k2}$
\begin{equation}
K\pi_k t(1-\varepsilon)\leq\nu_k(t)\leq K\pi_k t(1+\varepsilon) \hbox{
a.s.}.\label{nu_t}
\end{equation}
Finally from (\ref{Lve}) and (\ref{nu_t}) we obtain the inequality
\begin{equation}
\begin{split}
(1-\varepsilon)^2K\pi_k t M_k&\leq
L_{\hat x}^{\tilde c_k}(t)\leq\\
&\leq(1+\varepsilon)^2K\pi_k t M_k, \ t>t_{k2}, \hbox { a.s.}\label{ineq}
\end{split}
\end{equation}
We use the inequality  (\ref{ineq}) to get an estimation of the
process $(\hat x_1(t))_{t\geq 0}$.

 Let $\sum_{k=1}^n
\gamma_k\pi_k M_k\tan\theta_k\neq 0$. Integrating (\ref{hx}) over
$[0,t]$ and applying  (\ref{ineq}) we come to the inequality
\begin{equation}
\begin{split}
\hat x_1(0)+\hat w_1(t)+Kt\sum_{k=1}^n\gamma_k(1-\varepsilon_k)^2\pi_k M_k\tan\theta_k\leq\\
\leq\hat x_1(t)\leq\\
\leq\hat x_1(0)+\hat w_1(t)+Kt\sum_{k=1}^n\gamma_k(1+\varepsilon_k)^2\pi_k M_k\tan\theta_k\label{ineq_to}
\end{split}
\end{equation}
valid for all $t>t^*=\max_{k=\overline{1,n}}\{t_{12},\dots,t_{n2}\},$
where
$$
\varepsilon_k=
\begin{cases}
\varepsilon&\text{ if $\gamma_k\tan\theta_k>0,$}\\
-\varepsilon&\text{ if $\gamma_k\tan\theta_k<0.$}
\end{cases}
$$
So if $\sum_{k=1}^n \gamma_k\pi_k M_k\tan\theta_k\neq 0$ we can
choose $\varepsilon$ so small that the expressions
$$
\sum_{k=1}^n \gamma_k\pi_k M_k\tan\theta_k,
$$
$$
\sum_{k=1}^n \gamma_k(1+\varepsilon_k)^2\pi_kM_k\tan\theta_k,
$$ and
$$
\sum_{k=1}^n \gamma_k(1-\varepsilon_k)^2\pi_kM_k\tan\theta_k
$$
 have the same sign. By the low of iterated logarithm  there exists $t_*>0$ such that for all $t>t_*$ the inequality
\begin{equation}
|\hat w_1(t)|\leq (1+\varepsilon)\sqrt{2t\ln\ln t}\label{ILL}
\end{equation}
fulfilled. Combining (\ref{ineq_to}) and (\ref{ILL}) we see that
\begin{eqnarray*}
\hat x_1(t)\to +\infty &\mbox{as } \ t\to +\infty &\hbox{if } \ \sum_{k=1}^n \gamma_k\pi_k M_k\tan\theta_k>0 \\
\hat x_1(t)\to -\infty & \hbox{as } \ t\to +\infty &\hbox{if } \
\sum_{k=1}^n \gamma_k\pi_k M_k\tan\theta_k<0.
\end{eqnarray*}

Now we can return from the process $(\hat x(t))_{t\geq 0}$ to
$(x(t))_{t\geq 0}$ making the inverse change of time and doing the
rotation of coordinate system inverse to that defined by
(\ref{x'}).  Then we see that $x(t)\to 0$, as $t\to\zeta-$, if
$\sum_{k=1}^n (p_k-q_k)\pi_k
\frac{\xi_{k-1}\xi_k}{p_k\xi_{k-1}+q_k\xi_k}\tan\theta_k>0$. In this case, as
a consequence of (\ref{ineq_to}), there exist $t_0, K_1, K_2>0$
such that for all $t>t_0$ the inequality $\hat x_1(t)>K_1+K_2t$ holds
true a.s. This implies convergence of the integral $\int_0^\infty
e^{-2\hat x_1(s)}ds$. Therefore, $\zeta<\infty$ a.s. Similarly, if
$\sum_{k=1}^n (p_k-q_k)\pi_k
\frac{\xi_{k-1}\xi_k}{p_k\xi_{k-1}+q_k\xi_k}\tan\theta_k<0$, then
$\zeta=+\infty$ a.s. and $x(t)\to\infty$ as $t\to\infty$ a.s.

Let $\sum_{k=1}^n\gamma_k\pi_k M_k\tan\theta_k=0$. Define
$$
\mu_1=\inf\{t\geq 0: \hat x(t)\in\tilde c_1\},\\
$$
and, for $j\geq2$,
$$
\mu_j=\inf\{t>\tilde \mu_{j-1}: \hat x(t)\in\tilde c_1\},\\
$$
where
$$
\tilde \mu_{j}=\inf\left\{t>\mu_j: \hat x(t)\in\{\tilde c_n, \tilde c_2\}\right\}
$$
(see Fig. \ref{Fig. 3}).
\begin{figure}[h]
  \includegraphics[width=6 cm]{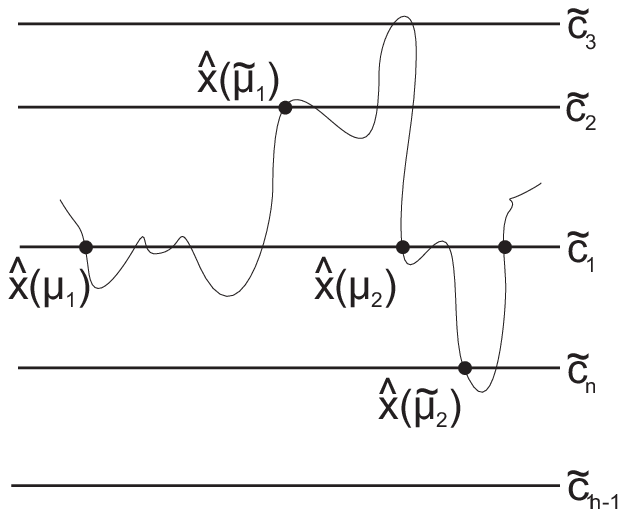}
  \caption{}
  \label{Fig. 3}
\end{figure}

Denote
$$
\psi_j=\sum_{k=1}^n\gamma_k\tan{\theta_k}(L_{\hat x}^{\tilde c_k}(\mu_{j+1})-L_{\hat x}^{\tilde c_k}(\mu_j)),\ j\geq 1.
$$
It is easily seen that if $\sum_{k=1}^n\gamma_k\pi_k M_k\tan\theta_k=0$ then $\Bbb E \psi_j=0,\ j\geq 1$.
From (\ref{hx}) we get
$$
\hat x_1(\mu_j)=\hat x_1(\mu_1)+\sum_{l=1}^j\left(\hat w_1(\mu_{l+1})-\hat w_1(\mu_l)+\psi_l\right).
$$
Note that $\chi_l=\hat w_1(\mu_{l+1})-\hat w_1(\mu_l)+\psi_l,\  l\geq 1,$ are independent identically distributed random variables (i.i.d.r.v.), $\Bbb E\chi_l=0,$ and all the moments are finite. Then by law of iterated logarithm for sum of i.i.d.r.v. (cf. \cite{Pet}) we have
$$
\varlimsup_{j\to\infty}\hat x_1(\mu_j)=+\infty, \ \varliminf_{j\to\infty}\hat x_1(\mu_j)=-\infty\ \ \hbox {a.s.}
$$
It is easily seen that in this case the integral $\int_0^\infty e^{-2\hat x_1(s)}ds$ diverges a.s. Therefore $\zeta=+\infty$ a.s.

Recall that conventionally $c_{nl+k}=c_k,\ \xi_{nl+k}=\xi_{k}$ and put additionally $\gamma_{nl+k}=\gamma_k,\pi_{nl+k}=\pi_k,\ k=1,\dots,n, l\in{\Bbb N}.$

\begin{definition} We call $c_i0c_j,1\leq i<j,$ a characteristic wedge if $\varphi_i,\varphi_{i+1},\dots,\varphi_j$ is a communicating class for the Markov chain $(\hat x_2(\tau_m))_{m\geq1}$.
\end{definition}

\begin{theorem}\label{Theorem} {\it Let $x(0)=x^0, \ x^0\neq0.$ Then

1) $\Bbb P_{x_0}\{\zeta<\infty\}+\Bbb P_{x_0}\{\varlimsup_{t\to+\infty}|x(t)|=\infty\}=1$.

2) The probability $\Bbb P_{x_0}\{\zeta<\infty\}$ is equal to the probability of hitting some characteristic wedge $c_{j_1}0c_{j_2},1\leq j_1<j_2,$ for which $$\sum_{k=j_1}^{k=j_2}(p_k-q_k)\pi_k
\frac{\xi_{k-1}\xi_k}{p_k\xi_{k-1}+q_k\xi_k}\tan\theta_k>0$$ where $\pi_k,\ j_1\leq k\leq j_2,$ is defined  in Section 3.

In particular, suppose $0\leq p_k<1$ or $0<p_k\leq1, \ k=1,\dots,n.$ Then, if $\sum_{k=1}^n (p_k-q_k)\pi_k
\frac{\xi_{k-1}\xi_k}{p_k\xi_{k-1}+q_k\xi_k}\tan\theta_k>0$, the process
$(x(t))_{t\geq 0}$ hits the origin a.s.;
if $\sum_{k=1}^n (p_k-q_k)\pi_k
\frac{\xi_{k-1}\xi_k}{p_k\xi_{k-1}+q_k\xi_k}\tan\theta_k<0$, the process
$(x(t))_{t\geq 0}$ does not hit the origin a.s. and
$|x(t)|\to\infty$ as $t\to+\infty$ a.s.; if $\sum_{k=1}^n (p_k-q_k)\pi_k
\frac{\xi_{k-1}\xi_k}{p_k\xi_{k-1}+q_k\xi_k}\tan\theta_k=0$ the process does not hit the origin  a.s. and $\varlimsup_{t\to+\infty}|x(t)|=\infty$ a.s.}
\end{theorem}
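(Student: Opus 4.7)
The plan is as follows. By Remark \ref{Remark 2} the time-changed process $\hat x$ lives on $[0,\infty)$, and since $\tilde x_1=-\ln r$, the event $\{x(t)\to 0\ \text{as}\ t\to\zeta-\}$ is identical to $\{\hat x_1(t)\to+\infty\ \text{as}\ t\to\infty\}$, with $\zeta=\int_0^\infty e^{-2\hat x_1(s)}\,ds$; likewise $\{\varlimsup_{t\to\zeta-}|x(t)|=\infty\}$ corresponds to $\{\varliminf_{t\to\infty}\hat x_1(t)=-\infty\}$ since $\hat x_2$ is bounded. The whole task therefore reduces to analysing the first component of (\ref{hx}),
\begin{equation*}
\hat x_1(t)=\hat x_1(0)+\hat w_1(t)+\sum_{k=1}^n \gamma_k\tan\theta_k\,L_{\hat x}^{\tilde c_k}(t),
\end{equation*}
as $t\to\infty$.

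First I would settle the irreducible case (Lemma \ref{Lemma 5}, when $0\le p_k<1$ or $0<p_k\le 1$ for all $k$). By the SLLN applied to the embedded Markov chain $(\hat x_2(\tau_m))$, the renewal theorem, and the bounds (\ref{Lve})--(\ref{ineq_to}) already derived in the excerpt, the drift term grows like $Kt\sum_k\gamma_k\pi_k M_k\tan\theta_k$ almost surely, while $\hat w_1(t)=O(\sqrt{t\ln\ln t})$ by the LIL. If
\begin{equation*}
S:=\sum_{k=1}^n \gamma_k\pi_k M_k\tan\theta_k=\sum_{k=1}^n(p_k-q_k)\pi_k\frac{\xi_{k-1}\xi_k}{p_k\xi_{k-1}+q_k\xi_k}\tan\theta_k
\end{equation*}
is nonzero, the linear drift dominates the Brownian noise and $\hat x_1(t)\to\mathrm{sgn}(S)\cdot\infty$ a.s. In the positive case $\hat x_1(t)\ge K_1+K_2 t$ for large $t$, so $\int_0^\infty e^{-2\hat x_1(s)}ds<\infty$ and $\zeta<\infty$ a.s.; in the negative case $\zeta=\infty$ and $|x(t)|\to\infty$ a.s. For $S=0$ I would use the excursion decomposition at the return times $\mu_j$ to $\tilde c_1$: the increments $\chi_l=\hat w_1(\mu_{l+1})-\hat w_1(\mu_l)+\psi_l$ are i.i.d.\ with zero mean and all moments finite (by the strong Markov property of $\hat x$ and the exponential tails of the excursion durations in a bounded angular strip), whence the LIL for sums of i.i.d.\ variables gives $\varlimsup_j\hat x_1(\mu_j)=+\infty$ and $\varliminf_j\hat x_1(\mu_j)=-\infty$ a.s., which forces both $\zeta=\infty$ and $\varlimsup|x(t)|=\infty$.

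For the general reducible case of Subsections 3.3--3.4, I would show that with probability one the process eventually remains inside the union of strips corresponding to a single communicating class of $(\hat x_2(\tau_m))$, that is, inside a single characteristic wedge; transient angular states are visited only finitely often. Conditioning on entering the characteristic wedge $c_{j_1}0c_{j_2}$, the previous analysis applies with the restricted chain and its invariant distribution, so hitting the origin occurs a.s.\ inside that wedge iff the restricted sum $\sum_{k=j_1}^{j_2}(p_k-q_k)\pi_k\frac{\xi_{k-1}\xi_k}{p_k\xi_{k-1}+q_k\xi_k}\tan\theta_k$ is positive, and otherwise $\varlimsup|x(t)|=\infty$ a.s. Summing over characteristic wedges with their entrance probabilities (computed via the $\alpha(k_0)$ of Subsection 3.4) yields part~2; part~1 follows because within every characteristic wedge the dichotomy $\{\zeta<\infty\}\cup\{\varlimsup|x|=\infty\}$ holds almost surely.

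The main obstacle I foresee is the critical case $S=0$. Establishing the i.i.d.\ structure and the moment bounds on the $\chi_l$ requires the strong Markov property of $\hat x$ together with exponential tail estimates for the hitting times of $\tilde c_1$ from inside a bounded angular strip; and deducing $\int_0^\infty e^{-2\hat x_1(s)}ds=\infty$ from the mere oscillation of $\hat x_1(\mu_j)$ is also delicate, because one must quantify the time the continuous process $\hat x_1$ spends at nonpositive values between successive excursions. A secondary subtlety is verifying that in the reducible case the process really does get absorbed into exactly one characteristic wedge in finite time a.s., which I would handle by proving that transient states of $(\hat x_2(\tau_m))$ have geometrically decaying occupation times.
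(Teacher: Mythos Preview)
Your proposal is correct and follows essentially the same route as the paper: reduce to the first coordinate of (\ref{hx}), combine the SLLN for the embedded chain with the renewal theorem and the LIL to handle $S\neq 0$, and in the critical case $S=0$ use the i.i.d.\ increments $\chi_l$ over successive returns $\mu_j$ to $\tilde c_1$ together with the LIL for sums of i.i.d.\ variables; the reducible situation is then dispatched by conditioning on the characteristic wedge that absorbs the chain. The obstacles you flag (finiteness of all moments of $\chi_l$, divergence of $\int_0^\infty e^{-2\hat x_1(s)}ds$ when $\hat x_1(\mu_j)$ oscillates, and a.s.\ absorption into a single communicating class) are precisely the points the paper treats tersely (``all the moments are finite'', ``it is easily seen''), so your level of detail already matches, and in places exceeds, that of the original.
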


\begin{remark}\label{Remark 3} It is not difficult to calculate that if $0\leq p_k<1, p_1\cdot\hdots\cdot p_n\neq q_1\cdot\hdots\cdot q_n,$ the process $(x(t))_{t\geq 0}$ hits the origin if and only if
$$
\sum_{k=1}^n\xi_k\sum_{j=1}^n r_{k+1}\cdot\hdots\cdot r_{k+j-1}(r_{k+j}-1)\tan\theta_{k+j}>0
$$
where $r_j=p_j/q_j,\ j=1,\dots,n.$
\end{remark}

\section{Examples}

{\bf Example 1.} Let $n=1$. This is a degenerate case in which the process reaches the origin if and only if $\tan\theta_1>0,$ i.e. $\theta_1>0$.
\newline
{\bf Example 2.} Let $n=2, \xi_1=\xi, 0\leq p_1,p_2\leq 1$. Then $\pi_1=\pi_2=1/2$ and the
process hits the origin if and only if
\begin{multline*}
(p_1-q_1)(p_2-q_2)\xi(\tan\theta_1-\tan\theta_2)+\\+2\pi\left((p_1-q_1)q_2\tan\theta_1+(p_2-q_2)p_1\tan\theta_2\right)>0.
\end{multline*}
Consider the particular case when $p_1=1,\ p_2=0.$
Then the part of the process in the interior of the wedge
$c_10c_2$ is a Brownian motion with instantaneous reflection at
the boundary of the wedge (Fig. \ref{Fig. 4}).
\begin{figure}[h]
  \includegraphics[width=6 cm]{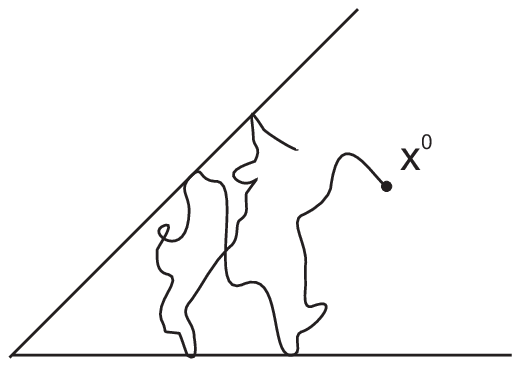}
  \caption{}
  \label{Fig. 4}
\end{figure}
Let the starting point $x^0$
be inside the wedge and away from the origin.
In this case the process hits the origin if and only if
$\tan\theta_1-\tan\theta_2>0$, i.e. $\theta_1-\theta_2>0$. Thus
the result we obtained coincides with that of \cite{VW}.
\newline
{\bf Example 3.} Let $n=3, 0<p_k\leq1$ or $0\leq p_k\leq 1, k=1,2,3.$ If $p_1p_2p_3\neq q_1q_2q_3$, the invariant distribution has the form
\begin{eqnarray*}
\pi_1&=&\frac{\tilde q_2+\tilde p_2\tilde p_3}{D},\\
\pi_2&=&\frac{\tilde q_3+\tilde p_1\tilde p_3}{D},\\
\pi_3&=&\frac{\tilde q_1+\tilde p_1\tilde p_2}{D},
\end{eqnarray*}
where $D=\tilde q_1+\tilde q_2+\tilde q_3+\tilde p_1\tilde p_2+\tilde p_2\tilde p_3+\tilde p_1\tilde p_3$. In this case the process hits the origin if and only if
\begin{multline*}
(p_1-q_1)\tan\theta_1(q_2p_3\xi_2+q_2q_3\xi_3+p_2p_3\xi_1)+\\
+(p_2-q_2)\tan\theta_2(p_1q_3\xi_3+q_1q_3\xi_1+p_1p_3\xi_2)+\\
+(p_3-q_3)\tan\theta_3(q_1p_2\xi_1+q_1q_2\xi_2+p_1p_2\xi_3)>0.
\end{multline*}

If $p_1p_2p_3= q_1q_2q_3$ we get
\begin{eqnarray*}
\pi_1&=&\frac{\tilde q_2\tilde q_3}{\tilde q_2\tilde q_3+\tilde p_1\tilde q_3+\tilde p_1\tilde p_2},\\
\pi_2&=&\frac{\tilde p_1\tilde q_3}{\tilde q_2\tilde q_3+\tilde p_1\tilde q_3+\tilde p_1\tilde p_2},\\
\pi_3&=&\frac{\tilde p_1\tilde p_2}{\tilde q_2\tilde q_3+\tilde p_1\tilde q_3+\tilde p_1\tilde p_2}
\end{eqnarray*}
and the process hits the origin if and only if
$$
q_2q_3(p_1-q_1)\tan\theta_1+p_1q_3(p_2-q_2)\tan\theta_2+p_1p_2(p_3-q_3)\tan\theta_3>0.
$$

\vskip 15 pt

\makeatother

Institute of Geophysics, National Academy of Sciences of Ukraine,
Palladina pr. 32, 03680, Kiev-142, Ukraine,\newline
oaryasova$@$mail.ru
\vskip 10 pt

Institute of Mathematics,  National Academy of Sciences of
Ukraine, Tereshchenkivska str. 3, 01601, Kiev, Ukraine\newline
apilip$@$imath.kiev.ua
\end{document}